\author{Vassily Olegovich Manturov}
\date{}
\theoremstyle{plain}
\newtheorem{thm}{Theorem}
\theoremstyle{dfn}
\newtheorem{dfn}{Definition}
\newtheorem{rk}{Remark}
\def\R{{\mathbb R}}
 \def\0{{\mathbbf 0}}
 \def\1{{\mathbbf 1}}
 \newcommand{\skcr}{\raisebox{-0.25\height}{\includegraphics[width=0.5cm]{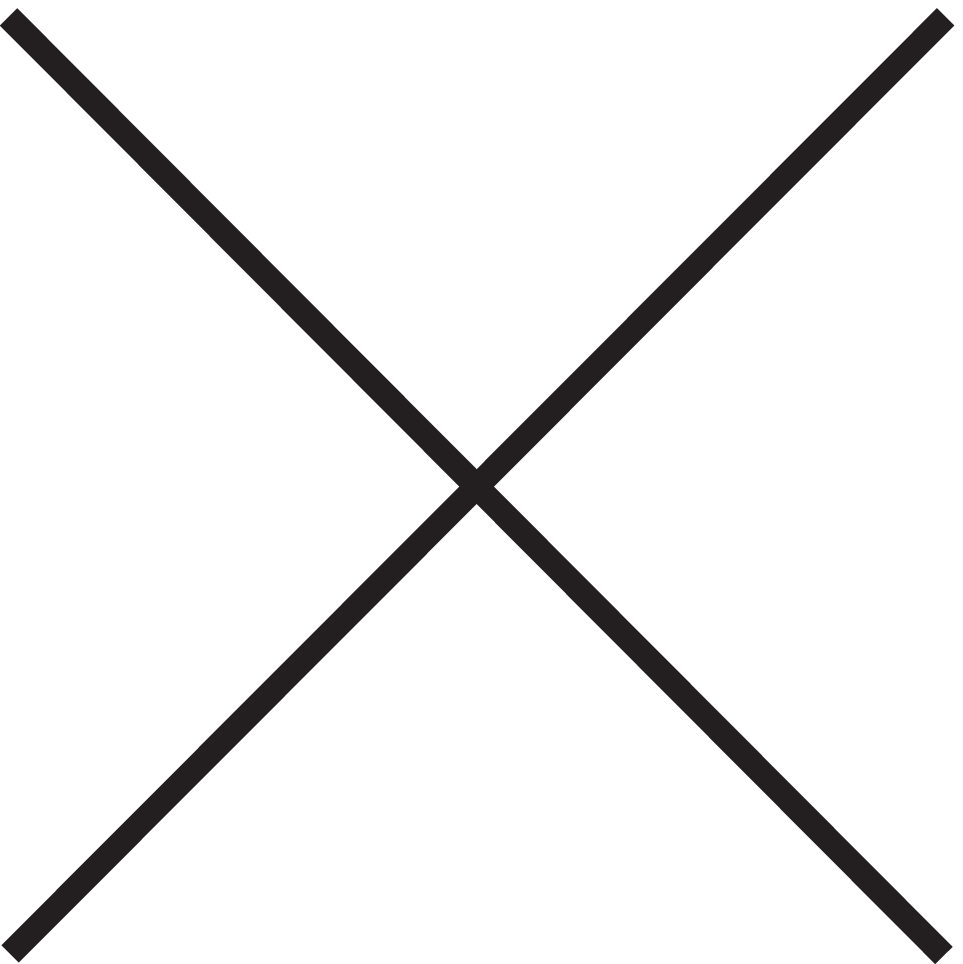}}}
 \newcommand{\skcrv}{\raisebox{-0.25\height}{\includegraphics[width=0.5cm]{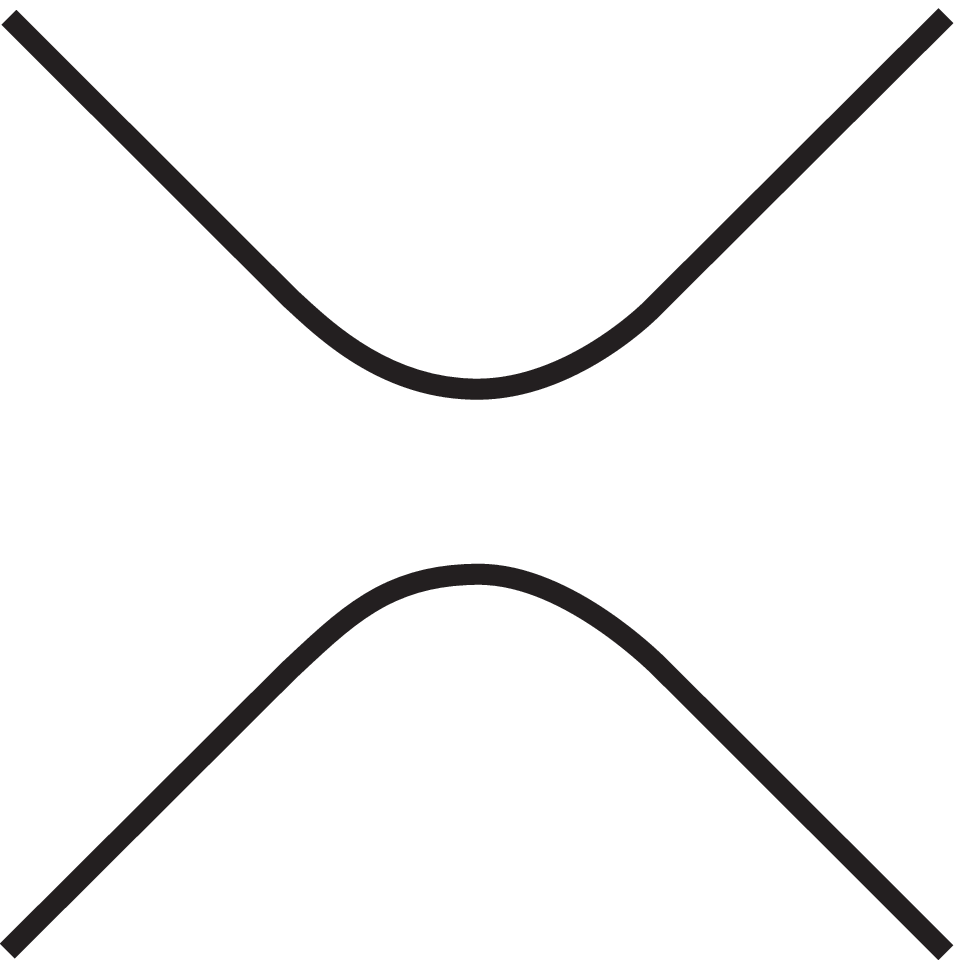}}}
\newcommand{\skcrh}{\raisebox{-0.25\height}{\includegraphics[width=0.5cm]{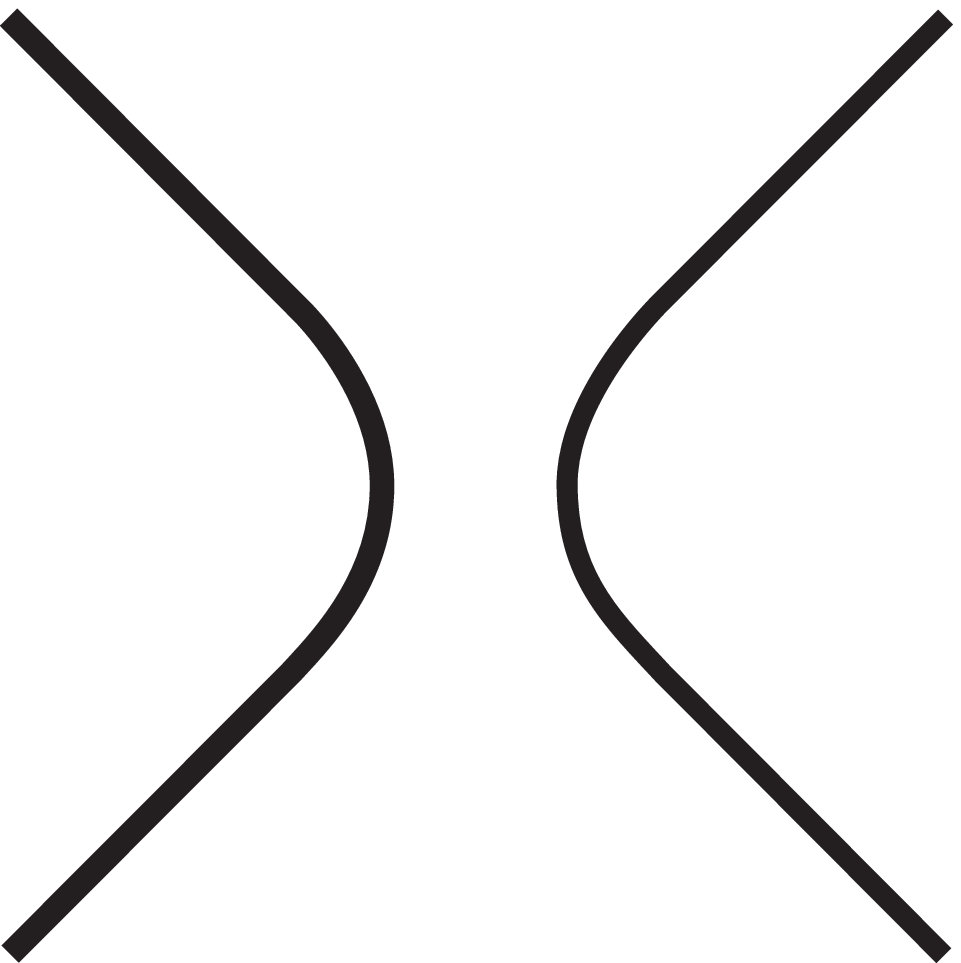}}}
\title{Framed $4$-valent Graph Minor Theory I: Intoduction. A Planarity Criterion and Linkless Embeddability}
\begin{document}

\maketitle

\begin{abstract}
The present paper is the first one in the sequence of papers about a
simple class of  {\em framed $4$-graphs}; the goal of the present
paper is to collect some well-known results on planarity and to
reformulate them in the language of {\em minors}.

The goal of the whole sequence is to prove analogues of the
Robertson-Seymour-Thomas theorems for framed $4$-graphs: namely, we
shall prove that many minor-closed properties are classified by
finitely many excluded graphs.

From many points of view, framed $4$-graphs are easier to consider
than general graphs; on the other hand, framed $4$-graphs are
closely related to many problems in graph theory.

{\bf Keywords:} graph, $4$-valent, minor, planarity, embedding,
immersion, Wagner conjecture.
\end{abstract}

{\bf AMS MSC} 05C83, 57M25, 57M27

Some years ago, a milestone in graph theory was established: as a
result of series of papers by Robertson, Seymour (and later joined
by Thomas) \cite{RS20} proved the celebrated Wagner conjecture
\cite{Wagner} which stated that if a class of graphs (considered up
to homeomorphism) is minor-closed (i.e., it is closed under edge
deletion, edge contraction and isolated node deletion), then it can
be characterized by a finite number of excluded minors. For a
beautiful review of the subject we refer the reader to L.Lov\'asz
\cite{Lovasz}.

This conjecture was motivated by various evidences for concrete
natural minor-closed properties of graphs, such as knotless or
linkless embeddability in $\R^{3}$, planarity or embeddability in a
standardly embedded $S_{g}\subset \R^{3}$.

Here we say that a property $P$ is {\em minor-closed} if for every
graph $X$ possessing this property every minor $Y$ of $G$ possesses
$P$ as well. Later, we shall define the notion of {\em minor} in a
way suitable for framed $4$-graphs.

The most famous evidence of this conjecture is the {\em
Pontrjagin-Kuratowski} planarity criterion which states (in a
slightly different formulation) that a graph is not planar if and
only if it contains one of the two graphs shown in Fig. \ref{PK} as
a minor.

\begin{figure}
\centering\includegraphics[width=200pt]{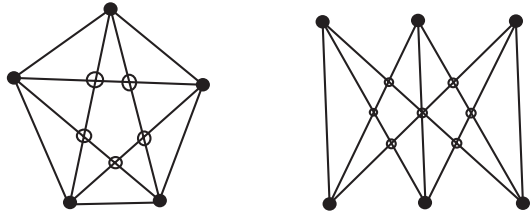} \caption{The two
Kuratowski graphs, $K_{5}$ and $K_{3,3}$} \label{PK}
\end{figure}

\begin{rk}
Throughout the paper (and all subsequent papers in the series), all
graphs are assumed to be finite; loops and multiple edges are
allowed.
\end{rk}

Among all graphs, there is an important class of four-valent  {\em
framed} graphs (or {\em framed} regular $4$-graphs). Here by {\em
framing} we mean a way of indicating which half-edges are opposite
at every vertex. Whenever drawing a framed four-valent graph on the
plain, we shall indicate its vertices by solid dots,
(self)intersection points of edges will be encircled, and the
framing is assumed to be induced from the plane: those half-edges
which are drawn opposite in $\R^{2}$ are thought to be opposite.
Half-edges  of a framed four-valent graph incident to the same
vertex are which are not {\em opposite}, are called {\em adjacent}.

This class of graph is interesting because of its close connection
to classical and virtual knot theory \cite{ManDkld,ManHdlbg},
homotopy classes of curves on surfaces, see also \cite{FM1,FM2}; for
more about virtual knot theory see \cite{MI}.

From time to time we shall admit some broader class of objects than
just framed four-valent graphs. By a {\em $4$-graph} we mean a
finite $1$-complex with every component either being homeomorphic to
a circle or being a graph with all vertices having valency $4$;
components of a $4$-graph homeomorphic to circles will be called
{\em circular components} or {\em circular edges}; by a {\em vertex}
of a framed $4$-graph we mean a vertex of its non-circular
component. By a ({\em non-circular}) edge of a $4$-graph we mean an
edge of its non-circular component. A $4$-graph is {\em framed} if
all non-circular components of it are framed and all circular
components of it are oriented.

There are some natural ways to extend the notion of {\em
minor-closed property} to four-valent framed graphs.

\begin{dfn}
A framed $4$-valent graph $G'$ is a {\em minor} of a framed
$4$-valent graph $G$ if $G'$ can be obtained from $G$ by a sequence
of {\em smoothing} operations ($\skcr\to \skcrv$ and $\skcr\to
\skcrh$) and deletions of connected components.
\end{dfn}

\begin{rk}
Whenever talking about embedding or immersion of a framed $4$-graph
into any $2$-surface we always assume its framing to be preserved:
opposite edges at every crossing should be locally opposite on the
surface.
\end{rk}

\begin{dfn}
We say that a framed $4$-graph $\Gamma$ admits a source-sink
structure if there is an orientation of all edges $\Gamma$ such that
at every vertex of $\Gamma$ some two opposite edges are incoming,
and the other two are emanating. Certainly, for every connected
framed four-valent graph, if a source-sink structure exists, then
there are exactly two such structures.
\end{dfn}

Moreover, it can be easily seen that if $\Gamma$ admits a
source-sink structure then every minor $\Gamma'$ of $\Gamma$ admits
a source-sink structure as well. Indeed, the smoothing operation can
be arranged to preserve the source-sink structure.

So, it is natural to ask many questions about graphs admitting a
source-sink structure.

\begin{rk}
In the present paper, we restrict ourselves to framed $4$-graphs
admitting source-sink structures. Framed $4$-graphs not admitting
source-sink structures will be considered in subsequent papers.
\end{rk}

Denote by $\Delta$ the following framed $4$-graph with $3$ vertices:
it has $3$ vertices $P,Q,R$, and $6$ edges $a,a',b,b',c,c'$ such
that at vertex $P$ the edges $a$ and $a'$ are opposite and both
connect $P$ to $Q$ (in $Q$ they are opposite, as well); $b,b'$
constitute the other pair of opposite edges at $P$; they connect $P$
to $R$, and they are opposite at $R$ as well; finally, $c$ and $c'$
are edges connecting $Q$ and $R$; these edges are opposite both at
$Q$ and at $R$.

\begin{figure}
\centering\includegraphics[width=200pt]{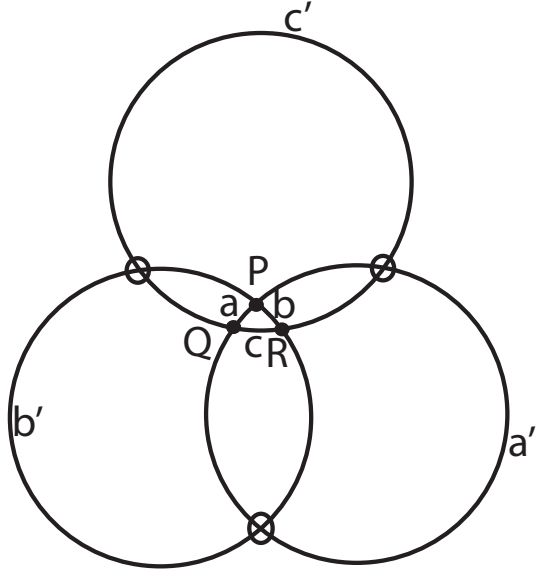} \caption{The
Graph $\Delta$} \label{Delta}
\end{figure}

When drawn immersed in $\R^{2}$, the graph $\Delta$ contains three
pairwise intersecting cycles $(a,a'),(b,b'),(c,c')$; each two of
these three cycles intersect transverselly at one point; thus, an
immersion requires at lease one intersection point for each pair of
these two cycles. In Fig. \ref{Delta} these three immersion points
are encircled.

\begin{dfn}
For a framed $4$-graph $P$ by a {\em loop} we mean either a circular
component (also treated as a map $S^{1}\to P$) or a map $f:S^{1}\to
\Gamma$ which is a bijection everywhere except preimages of
crossings of $\Gamma$.

A  {\em loop} is a circuit if its image is the whole graph $P$
(certainly, only connected framed $4$-graphs admit circuits).

A {\em loop} (resp., circuit) is {\em rotating} if at every crossing
$X$ which has two preimages $Y_{1}$ and $Y_{2}$, the neighbourhoods
of $Y_{1}$ is mapped to two {\em non-opposite} edges.

By abuse of notation, we shall say that a loop (a circuit) passes
through edges if its image contains these edges.
\end{dfn}

\begin{dfn}
Let $L_{1}, L_{2}$ be two loops of a framed $4$-graph $P$; let $X$
be a crossing of $P$; we say that $L_{1}$ and $L_{2}$ intersect
transversely at $X$ if  $L_{1}$ passes through a pair of opposite
edges at $X$ as well $L_{2}$.
\end{dfn}

\begin{dfn}
By a {\em chord diagram} we mean either an oriented circle ({\em
empty}) chord diagram or a cubic graph $D$ consisting of an oriented
cycle (the {\em core}) passing through all vertices of $D$ such that
the complement to it is a disjoint union of edges ({\em chords}) of
the diagram.
\end{dfn}

An easy exercise (see, e.g. \cite{ManVasConj}) shows that {\em every
connected framed $4$-graph admits a rotating circuit}.

Having a circuit $C$ of a framed connected $4$-graph $G$, we define
the chord diagram $D_{C}(G)$, as follows. If $G$ is a circle, then
$D_{C}(G)$ is empty. Think of $C$ as a map $f:S^{1}\to D$; then we
mark by points on $S^{1}$ preimages of vertices of $G$. Thinking of
$S^{1}$ as a core circle and connecting the preimages by chords, we
get the desired cubic graph.

\begin{rk}
Chord diagrams are considered up to combinatorial equivalence.
\end{rk}

\begin{rk}
One can associate chord diagrams not only to {\em rotating
circuits}, but for the present paper we restrict ourselves only with
rotating circuits and framed $4$-graphs admitting a source-sink
structure.
\end{rk}

The opposite operation (of restoring a framed $4$-graph with a
source-sink structure from a chord diagram) is obtained by removing
chords from the chord diagram and approaching two endpoints of each
chord towards each other as shown in Fig. \ref{CDgr}.

\begin{figure}
\centering\includegraphics[width=200pt]{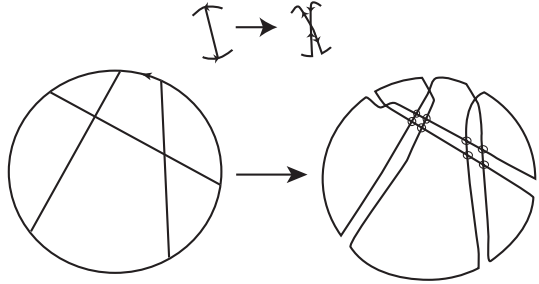} \caption{Restoring
a framed $4$-graph from a chord diagram} \label{CDgr}
\end{figure}

\begin{dfn}
A chord diagram $D'$ is called a {\em subdiagram} of a chord diagram
$D$ if $D$ can be obtained from $D$ by deleting some chords and
their endpoints.
\end{dfn}

It follows from the definition that the removal of a chord from a
chord diagram results in a smoothing of a framed $4$-graph.
Consequently, if $D'$ is a subdiagram of $D$, then the resulting
framed $4$-graph $G(D')$ is a {\em minor} of $G(D)$.

Every embedding $i:P\to \R^{3}$ gives rise to an embedding of every
rotating circuit $C$ of $P$: at each vertex where $C$ touches itself
we perform a smoothing.

We say that two rotating circuits $C_{1},C_{2}$ sharing no edges are
{\em not transverse} if at every vertex which belongs to both
$C_{1}$ and to $C_{2}$ the edges incident to $C_{1}$ are not
opposite at this vertex.

Any embedding of a framed $4$-valent graph in $\R^{3}$ is assumed to
be smooth in the following sense: in the neighbourhood of every
vertex $X$ we require that tangent vectors of opposite half-edges
are opposite. Thus, having a framed $4$-graph $P$ and an embedding
$i:P\to \R^{3}$, we may assume without loss of generality that the
small neighbourhood of every vertex $X$ of $P$ is mapped to a piece
of a $2$-surface containing $X$. Thus, having two rotating loops
$L_{1},L_{2}$ of $P$ with no transverse intersections we can define
the associate the disjoint embedding of $L_{1}$ and $L_{2}$ in
$\R^{3}$ obtained by local smoothing at some vertices. By abusing
notation, we shall talk about {\em images of loops or circuits} in
$\R^{3}$ meaning the cooresponding smoothings (which represent
collection of disjoint curves in $\R^{3}$.

\begin{dfn}
An embedding $i$ of a framed $4$-graph $P$ in $\R^{3}$ with a
source-sink structure is called {\em linkless} if for every two
rotating loops $L_{1}, L_{2}$ without transverse intersection the
linking number of their images is $0$.

Analogously, an embedding $i$ of a framed $4$-graph $P$ in $\R^{3}$
with a source-sink structure is {\em knotless} if the image of the
every rotating loop $L$ is unknotted.
\end{dfn}

This means that in the neighbourhood of such a vertex we can perform
a smoothing of $X$ and an embedding $i$ gives rise to embeddings of
all minors of $P$ defined up to homotopy.

Now we list some {\em minor} properties of framed $4$-valent graphs
(the proof is left for the reader):

\begin{enumerate}

\item Planarity.

\item Existence of an immersion into a fixed surface $\Sigma$ with
no more than $s$ transverse simple intersection points ($s$ fixed).

\item Linkless embeddability (in $\R^{3}$).

\item Knotless embeddability (in $\R^{3}$).

\end{enumerate}

The Main Theorem of the present paper sounds as follows
\begin{thm}
Let $\Gamma$ be a framed $4$-graph admitting a source-sink
structure. Then the following four conditions are equivalent:

\begin{enumerate}

\item Every generic immersion of $\Gamma$ in $\R^{2}$ requires at least $3$
additional crossings;

\item For every embedding of $\Gamma$, there exists a pair of
rotating loops with odd linking number.

\item $\Gamma$ has no linkless embedding in $\R^{3}$;

\item $\Gamma$ is not planar;

\item $\Gamma$ contains $\Delta$ as a minor.

\end{enumerate}

\end{thm}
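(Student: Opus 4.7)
I would prove the theorem by closing the cycle $(5)\Rightarrow(2)\Rightarrow(3)\Rightarrow(4)\Rightarrow(5)$ and adjoining the shortcut $(5)\Rightarrow(1)\Rightarrow(4)$. The implications $(1)\Rightarrow(4)$, $(2)\Rightarrow(3)$ and $(3)\Rightarrow(4)$ are essentially tautological: planarity yields an immersion with zero extra crossings, an odd linking number is nonzero, and any planar embedding pushed slightly into $\R^{3}$ is automatically linkless. For $(5)\Rightarrow(1)$, I first check that $\Delta$ itself forces at least three extra crossings in every generic immersion in $\R^{2}$: its three bigons $(a,a')$, $(b,b')$, $(c,c')$ are pairwise transversely linked in the plane as in Fig.~\ref{Delta}, and a $\Z/2$ intersection-number argument applied to each pair forces at least one crossing per pair. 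A smoothing operation does not change the number of virtual crossings in a fixed planar immersion, so the minimum crossing number is monotone under taking minors, giving $c_{\min}(\Gamma)\geq c_{\min}(\Delta)\geq 3$ whenever $\Delta$ is a minor of $\Gamma$.

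\textbf{The planarity criterion $(4)\Rightarrow(5)$.} This is the main obstacle. Fix a source-sink structure on $\Gamma$ and a rotating circuit $C$ (which exists by the cited exercise), and form the chord diagram $D=D_{C}(\Gamma)$, so that $\Gamma=G(D)$. Framing-preserving planarity of $\Gamma$ is equivalent to realizability of $D$ as the Gauss code of a generic plane curve. I would invoke the classical criterion for Gauss-code realizability, whose combinatorial content is that the interlacement graph $I(D)$ (vertices = chords, edges = interlocking pairs) must be bipartite; non-planarity thus forces an odd cycle in some $I(D_{C})$. The delicate combinatorial step is that an odd cycle in $I(D_{C})$ need not contain a triangle, so three pairwise interlocking chords cannot in general be extracted from a single chord diagram by chord deletion alone; I would overcome this by varying the rotating circuit. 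Different rotating circuits produce chord diagrams related by moves whose effect on $\Gamma$ is a composition of smoothings, and I would argue that for some choice of $C$ the diagram $D_{C}$ already contains three pairwise interlocking chords. Deleting the remaining chords is a smoothing of $\Gamma$, and the surviving three-chord diagram is precisely the chord diagram of $\Delta$ along the rotating circuit $P\to_{a} Q\to_{c} R\to_{b} P\to_{a'} Q\to_{c'} R\to_{b'} P$, so $\Delta$ appears as a minor of $\Gamma$.

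\textbf{The linking argument $(5)\Rightarrow(2)$.} The rotating circuit above decomposes $\Delta$ into two explicit rotating triangle loops $L_{1}: P\to_{a} Q\to_{c} R\to_{b} P$ and $L_{2}: P\to_{a'} Q\to_{c'} R\to_{b'} P$. By the definition of the framing they use pairs of non-opposite edges at each shared vertex and hence have no transverse intersections, so after local smoothings they become a pair of disjoint closed curves in $\R^{3}$. A direct $\Z/2$ computation, in the spirit of Conway--Gordon's argument for $K_{6}$, shows that for every embedding of $\Delta$ the linking number of these two loops is odd. For a general $\Gamma$ with $\Delta$ as a minor, I would lift $L_{1}$ and $L_{2}$ through the sequence of smoothings realizing the minor by absorbing the arcs that were smoothed away; rotating-ness, absence of transverse intersection, and the mod-$2$ linking number are preserved, yielding (2) for $\Gamma$ and closing the cycle. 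The heart of the proof is the combinatorial step in $(4)\Rightarrow(5)$ described above.
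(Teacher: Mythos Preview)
Your step $(5)\Rightarrow(2)$ contains a concrete error. You single out the one pair $L_{1}=(a,c,b)$, $L_{2}=(a',c',b')$ and assert that $lk(L_{1},L_{2})$ is odd for \emph{every} embedding of $\Delta$. This is false: for the reference immersion in Fig.~\ref{immer} one has $lk(L_{1},L_{2})=0$, and a single crossing switch between $a$ and $a'$ flips this parity, so no individual pair works for all embeddings. The Conway--Gordon mechanism you allude to is intrinsically a statement about a \emph{sum}: the paper takes the four complementary pairs $(a,b,c)/(a',b',c')$, $(a,b,c')/(a',b',c)$, $(a,b',c)/(a',b,c')$, $(a',b,c)/(a,b',c')$ and checks, case by case over the possible edge pairs, that $\sum lk\pmod 2$ is invariant under every crossing switch, hence equals its value $1$ at the reference embedding. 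Only then does one deduce that \emph{some} pair---which one depends on the embedding---has odd linking number. Your lifting to $\Gamma$ consequently also needs repair: you must lift all four pairs, not one.

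Your $(4)\Rightarrow(5)$ is also shaky in two places. First, framing-preserving planarity of $\Gamma$ is \emph{not} the same as Gauss-code realizability of $D_{C}(\Gamma)$: the word $ABAB$ is the basic non-realizable Gauss code, yet the corresponding two-vertex framed $4$-graph (two circles meeting transversely at two points) is planar; nor is classical Gauss realizability characterized by bipartiteness of the interlacement graph. The correct input is the result of \cite{ManVasConj}: non-planarity forces a $(2n{+}1)$-gon subdiagram in $D_{C}(\Gamma)$ for any fixed rotating circuit, so $Z_{2n+1}$ is a minor of $\Gamma$, and one then shows directly that $\Delta=Z_{3}$ is a minor of $Z_{2n+1}$ by further smoothings. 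Second, your alternative of changing the rotating circuit until three pairwise interlocking chords appear is plausible (a circuit change acts by local complementation on the interlacement graph, and complementing at a vertex of an induced odd cycle produces a triangle), but your proposal does not actually supply this argument, and the phrase ``moves whose effect on $\Gamma$ is a composition of smoothings'' is confused: different rotating circuits of the same $\Gamma$ recover the same $\Gamma$, not a minor of it.
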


\begin{proof}

Certainly, 1) yields 4) and 3) yields 4): a planar graph has a
planar {\em embedding} which is an immersion with {\em no additional
points}; moreover, a planar embedding is always linkless.

Our goal is to prove that the non-planarity of a framed $4$-graph
with a source-sink structure yields the existence of $\Delta$ as a
minor. After that, we see that every immersion of $\Delta$ requires
at least $3$ points, which is obvious, and prove that there for
every embedding of  $\Delta$ in $\R^{3}$, there exists a pair of
rotating loops without crossing points having {\em odd linking
number}. The latter automatically means that the embedding is not
linkless.

We follow the proof of Vassiliev's conjectutre \cite{Vas} from
\cite{ManVasConj}. Take a rotating circuit $C$ for $\Gamma$; by
assumption, $\Gamma$ admits a source-sink structure, thus, the chord
diagram $D_{C}(\Gamma)$ contains a $(2n+1)$-gon $\Delta_{2n+1}$ as a
subdiagram, see Fig. \ref{2ngon}.

\begin{figure}
\centering\includegraphics[width=200pt]{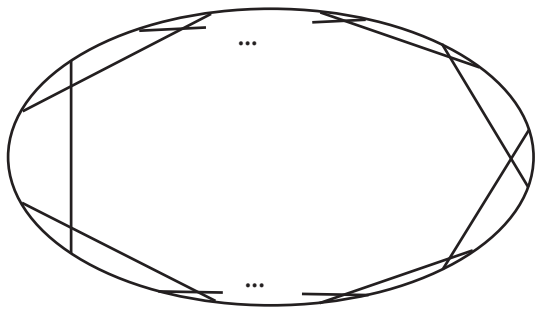} \caption{A
$(2n+1)$-gon} \label{2ngon}
\end{figure}

Consequently, the initial graph will have a minor which corresponds
to the chord diagram $\Delta_{2n+1}$; we denote this framed
$4$-graph by  $Z_{2n+1}$.

Now, we apply the following fact whose prove is left to the reader
as an exercise: $\Delta$ is a minor of $Z_{2n+1}$ for every natural
$n$.

Thus, $\Delta$ is a minor of $\Gamma$, as required.

Let us now prove that there is no linkless embedding of $\Delta$ in
$\R^{3}$; consequently, none exits for $\Gamma$.

Indeed, let us consider the immersion given in Fig. \ref{immer}.

\begin{figure}
\centering\includegraphics[width=200pt]{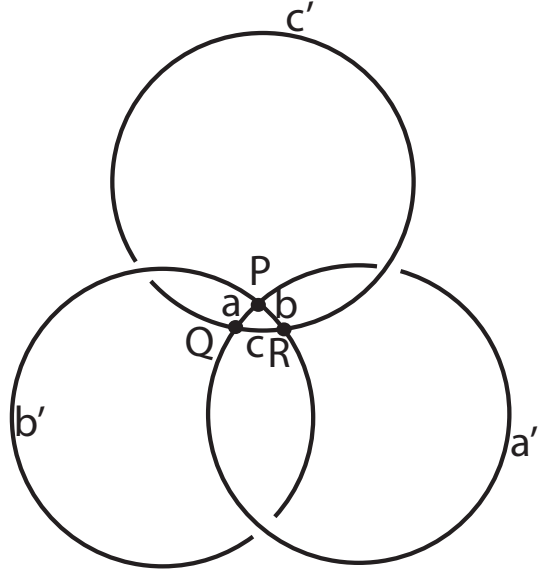} \caption{An
immersion of $\Delta$ in $\R^{3}$} \label{immer}
\end{figure}

Let us consider the following four pairs of cycles $F_{1}=(a,b,c),
F_{2}=({a',b',c'}),
G_{1}=(a,b,c'),G_{2}=(a',b',c),H_{1}=(a,b',c),H_{2}=(a',b,c'),
I_{1}=(a',b,c),I_{2}=(a,b',c')$.

For the immersion given in Fig. \ref{immer} we see that the linking
numbers are $lk(F_{1},F_{2})=0$, whence all linking numbers
$lk(G_{1},G_{2}), lk(H_{1},H_{2}),lk(I_{1},I_{2})$ are congruent to
$1$ modulo $2$.

Thus, the sum of these four linking numbers is odd.

Now, linking numbers do not change under homotopy; thus, this sum
remains odd when applying homotopy to the immersion given in Fig.
\ref{immer}.

Besides homotopy, we can apply some crossing switches in $3$-space.
The whole graph $\Delta$ consists of $6$ edges; if we apply a
crossing switch to an edge with itself (say, $a$ with $a$), it will
make no effect in any of the four summands. Now, if we apply a
crossing switch for an edge with a dash and a corresponding edge
without a prime (say, $a$ and $a'$), this will result in changes
modulo $2$ for all four summands; thus, the total sum will remain
odd.

In the case when we have two letters either both without primes or
both with primes (without loss of generality we may assume they are
$a$ and $b$), two of four summands will remain the same and the
other two will change. Consequently, the parity will remain the
same.

Finally, if we apply a crossing switch to some edges which are not
opposite at some vertex (without loss of generality, we may assume
we deal with $a$ and $b'$), this will change two of four summands:
namely, $lk(F_{1},F_{2})$ and $lk(G_{1},G_{2})$ will change by one.

Thus, the total parity of the sum of linking numbers will not
change.

Thus, we conclude that at least one of these four crossing numbers
will be odd.

\end{proof}

I am grateful to Igor Mikhailovich Nikonov for valuable comments and
to Denis Petrovich Ilyutko for useful discussions.

\end{document}